\newtheorem{theorem}{Theorem}[section]
\theoremstyle{plain}
\newtheorem{claim}{Claim}
\newtheorem{corollary}{Corollary}[section]%[theorem]{Corollary}
\newtheorem{proposition}{Proposition}[section]
\numberwithin{equation}{section}
\theoremstyle{definition}
\newtheorem{definition}{Definition}
\newtheorem{remark}{Remark}[section]
\def\e{\varepsilon}
\def\pd{\partial}
\def\re{\mathbb{R}}
\newcommand{\eqal}[1]{\begin{equation}\begin{aligned}#1\end{aligned}\end{equation}}
\begin{document}

\title[Convex solutions of Lagrangian mean curvature equation]{Regularity for convex viscosity solutions of Lagrangian mean curvature equation}
\author{Arunima Bhattacharya}
\address{Department of Mathematics, Phillips Hall\\
 the University of North Carolina at Chapel Hill, NC }
\email{arunimab@unc.edu}

\author{Ravi Shankar}
\address{Department of Mathematics, Fine Hall\\
Princeton University, Princeton, NJ}
\email{rs1838@princeton.edu}

%%%%%%%%%%%EDITS%%%%%%%%%%%%%%%%%%
%5/25/22: added Bao-Chen convex reference to intro

\maketitle

\begin{abstract}
We show that convex viscosity solutions of the Lagrangian mean curvature equation are regular if the Lagrangian phase has H\"older continuous second derivatives.  
%We also show regularity for Lipschitz phases under a H\"older condition on the gradient.

%Not clear if counterexamples exist for less regular phases, since they are not viscosity solutions.

%(If the gradient of the phase is also Lipschitz, then we also find an explicit Hessian estimate.)
%\textbf{I forgot, is the Hessian estimate, for convex smooth solutions, a separate paper you're writing?  If so we can remove this part. \\
%-Yes it's a separate paper that I started writing. I am assuming the phase to be $C^{1,1}$. If you want we can both write it and make it like a part II for this paper. }\\
%\textbf{-OK great.}
\end{abstract}

\section{Introduction}
We establish regularity for convex viscosity solutions of the Lagrangian mean curvature equation 
\begin{equation}
    F(D^{2}u)=\sum _{i=1}^{n}\arctan \lambda_{i}=\psi(x) \label{slag}
\end{equation}
under the assumption that $\psi:B_1(0)\to [0,n\pi/2)$ is in $C^{2,\alpha}(B_1(0))$ for some $\alpha\in(0,1)$. Here $\lambda_i$'s are the eigenvalues of the Hessian matrix $D^2u$ and then the phase $\psi$  becomes a potential for the mean curvature of the Lagrangian submanifold  $L=(x,Du(x))\subset \mathbb{C}^n$. The induced Riemannian metric $g$ can be written as 
$g=I_n+(D^2u)^2
$ or 
\[g_{ij}=\delta_{ij}+u_{ik}\delta^{kl}u_{lk}.
\]
In \cite[(2.19)]{HL}, the mean curvature vector $\vec{H}$ of this Lagrangian submanifold was shown to be
\begin{equation}
\vec{H}=J\nabla_g\psi \label{mean}
\end{equation}
 where $\nabla_g$ is the gradient operator for the metric $g$ and $J$ is the complex structure, or the $\frac{\pi}{2}$ rotation matrix in  $\mathbb{C}^n$. Note that by our assumption on $\psi$, $|H|$ is bounded. 

\smallskip
When the phase is constant, denoted by $c$, $u$ solves the special Lagrangian equation 
 \begin{equation}
\sum _{i=1}^{n}\arctan \lambda_{i}=c \label{s1}
\end{equation}or equivalently,
\[ \cos c \sum_{1\leq 2k+1\leq n} (-1)^k\sigma_{2k+1}-\sin c \sum_{0\leq 2k\leq n} (-1)^k\sigma_{2k}=0.
\]
 Equation (\ref{s1}) originates in the special Lagrangian geometry of Harvey-
Lawson \cite{HL}. The Lagrangian graph $(x,Du(x)) \subset \mathbb{C}^n$ is called special
when the argument of the complex number $(1+i\lambda_1)...(1+i\lambda_n)$
or the phase $\psi$ is constant, and it is special if and only if $(x,Du(x))$ is a
(volume minimizing) minimal surface in $(\mathbb{C}^n,dx^2+dy^2)$ \cite{HL}. %A special Lagrangian graph is a minimal submanifold in $(\mathbb R^n\times \mathbb R^n, dx^2+dy^2)$.

\smallskip
 A dual form of (\ref{s1}) is the Monge-Amp\`ere equation
\begin{equation}
    \sum_{i=1}^n \ln\lambda_i=c.\label{MA}
\end{equation}
This is the potential equation for special Lagrangian submanifolds in $(\mathbb {C}^n, dxdy)$ as introduced in \cite{Hi}. The gradient graph $(x,Du(x))$ is volume maximizing in this pseudo-Euclidean space, as shown by Warren \cite{W}. In the 1980s, Mealy \cite{Me} showed that an equivalent algebraic form of the above equation is the potential equation for his volume maximizing special Lagrangian submanifolds in $(\mathbb C^n, dx^2-dy^2)$.

%Our main result in this paper is the following:
\smallskip
The regularity of solutions is a fundamental problem for these geometrically and analytically significant equations.  Our main results are the following:
\begin{theorem}
\label{thm}
Let $u$ be a convex viscosity solution of \eqref{slag} on $B_1(0)\subset\mathbb R^n$, where $\psi\in C^{2,\alpha}(B_1)$.  Then $u\in C^{4,\alpha}(B_1)$.
\end{theorem}

We use the above regularity to prove the following interior estimate.

\begin{theorem}
\label{thm:est}
Let $u$ be a $C^{4,\alpha}$ convex solution of \eqref{slag} on $B_1(0)\subset\mathbb R^n$, where $\psi\in C^{2,\alpha}(B_1)$.  Then $u$ satisfies the following Hessian estimate
\begin{align}
\label{hess_est}
|D^2u(0)|\le C(n,\alpha, osc_{B_{1/2}}(u), \|\psi\|_{C^{2,\alpha}(B_{1/2})}).
\end{align}
\end{theorem}

\medskip
We state the following %applications
application of our result. 
\vspace{0.25cm}

\noindent
\textbf{Application. }%1.}
\textit{Lagrangian mean curvature flow:} \\
We specialize to $\psi(x)=\kappa\cdot x+c$.  In this case, the gradient graph $(x,Du(x))$ corresponds to a translating soliton for Lagrangian mean curvature flow. Indeed, if $u(x,t)$ solves the potential equation
$$
u_t=\sum_{i=1}^n \arctan\lambda_i(x,t),
$$
then the gradient graph $X(t)=(x,Du(x,t))$ solves the Lagrangian mean curvature equation
$$
(X_t)^\bot=H,
$$
where $H=J\nabla_g\psi$ is the mean curvature, and $\bot$ is the projection onto the normal space of $\{(x,Du(x,t))\}_x\subset\mathbb R^n\times\mathbb R^n$; see \cite[pg 203]{CCH}. Now let $u(x,0)=u(x)$ solve \eqref{slag} for $\psi(x)=\kappa\cdot x+c$. Then $u(x,t)=u(x)+t(\kappa\cdot x+c)$, so $(x,Du(x,t))=(x,Du(x))+t(0,\kappa)$ is a translating soliton with ``constant" mean curvature $\kappa$. We conclude that translating Lagrangian solitons with convex potentials and ``vertical" speeds $(0,\kappa)$ are smooth. Local a priori estimates for convex solutions to Lagrangian mean curvature flow were found in \cite{NY}.

\begin{comment}
\medskip
\noindent
\textbf{Application 2.} \textit{Second boundary value problem:}\\ Specializing again to $\psi(x)=\kappa\cdot x+c$, we consider the second boundary value problem of equation \eqref{slag}, i.e., find $u\in C^\infty(\overline\Omega)$ solving
\begin{align}
\begin{cases}
\label{2nd}
\begin{split}
\sum_{i=1}^n\arctan\lambda_i&=\kappa\cdot x+c,\\
Du(\Omega)&=\tilde \Omega.
\end{split}
\end{cases}
\end{align}
If $\Omega$ and $\tilde\Omega$ are uniformly convex domains with smooth boundary, and $|\kappa|$ is sufficiently small, then it was shown in \cite[Theorem 1.3]{WHB} that there exist $c$ and a uniformly convex solution $u(x)$ which is unique up to a constant.  In the special Lagrangian case $\kappa=0$, this was solved in \cite{BrW}.  Using interior regularity, we obtain a result for general convex domains. The boundary behavior is unclear, currently.
%The reason for the difficulty: consider u^\e=sqrt{x^2+\e^2} on (0,1).  This converges uniformly to u=|x|, but \partial u((0,1))={1}, while that of u^\e is (0,1-delta).  This shows that the subgradients can be pushed onto the boundary in such limits, and thus more analysis is needed.

\begin{theorem}
\label{thm:2nd}
%If $\Omega,\tilde\Omega$ are convex domains, then problem \eqref{2nd} is solvable for convex $u(x)\in C^\infty(\Omega)\cap C(\overline\Omega)$.
If $\Omega,\tilde\Omega$ are convex domains and $|\kappa|$ is sufficiently small, then there exist $c\in(-n\pi/2,n\pi/2)$ and $u\in C^\infty(\Omega)\cap C(\overline\Omega)$ solving \eqref{slag} with $\psi(x)=\kappa\cdot x+c$ such that $Du(\Omega)\subset\overline{\tilde\Omega}$.
\end{theorem}

\end{comment}

\smallskip
The convexity of the special Lagrangian equation plays a dominant role %.
in its regularity theory.  
% regularity theory. 
The arctangent operator is concave if $u$ is convex, or if the Hessian has a lower bound $\lambda\ge 0$.  Bao and Chen \cite{BCconvex} showed regularity for convex $W^{2,p}$ strong solutions.  For smooth convex solutions of (\ref{s1}), Hessian estimates have been obtained by Chen-Warren-Yuan \cite{WYJ}. Recently in \cite{CSY}, Chen-Shankar-Yuan confirmed that convex viscosity solutions of \eqref{s1} are smooth. The semiconvex singular solutions constructed by Wang-Yuan \cite{WdY} show that the convexity condition is necessary.  Similarly, if in \eqref{s1} we have critical phase $|c|=(n-2)\pi/2$ or supercritical phase $|c|>(n-2)\pi/2$, then $F(D^2u)$ has convex level sets, but it was shown by Yuan \cite{YY0} that this fails for subcritical phases $|c|<(n-2)\pi/2$.  For critical and supercritical phases, Hessian estimates for \eqref{s1} have been obtained by Warren-Yuan \cite{WY9,WY} and Wang-Yuan \cite{WaY}, see also \cite{L}. For subcritical phases, $C^{1,\alpha}$ solutions of \eqref{s1} were constructed by Nadirashvili-Vl\u{a}du\c{t} \cite{NV} and Wang-Yuan \cite{WdY}.  Hessian lower bounds also play a role in the dual equation \eqref{MA}.  The Monge-Amp\'ere type equation $\det D^2u=h(x)$ has Pogorelov-type singular $C^{1-2/n}$ convex viscosity solutions whose graphs contain a line.  However, under the necessary \textit{strict} convexity assumption $\lambda>0$, interior regularity was obtained by Pogorelov \cite{P2} for a smooth enough right hand side $h(x)$, by Urbas \cite{U} if $h(x)$ is Lipschitz and $u(x)\in C^{(1-2/n)^+}$, and by Caffarelli \cite{Caf} if $h(x)$ is merely H\"older.  
%The properties of the operator $F$ in (\ref{s1}) depend on the range of the phase. The phase $(n-2)\frac{\pi}{2}$ is called critical because the level set $\{ \lambda \in \mathbb{R}^n \vert \lambda$ satisfying $ (\ref{s1})\}$ is convex only when $|c|\geq (n-2)\frac{\pi}{2}$. If $|c|\geq (n-1)\frac{\pi}{2}$ then $F$ is concave, and if $ (n-2)\frac{\pi}{2}\leq |c|\leq (n-1)\frac{\pi}{2}$ then the level sets of $F$ are convex although $F$ can fail to be concave in general as shown by Yuan \cite{YY0}.\\
%By symmetry $0\geq\Theta\geq- (n-2)\frac{\pi}{2}+\delta$ can be treated similarly. \\

\smallskip
The convexity %condition
of $u$ creates its own challenges in proving regularity, since it is unstable under smooth approximations of the boundary value.  To use the method of a priori estimates, one would solve the Dirichlet problem for a modified concave equation $\Tilde{F}(D^2u)=\psi(x)$, where $\tilde f(\lambda)=\arctan\lambda$ for $\lambda \geq 0$ and $=\lambda$ for $\lambda<0$.  If we approximate the boundary data and find the smooth solution of this problem, it may no longer be convex, and therefore lacks a connection to the original problem.  Although a priori estimates are available for \eqref{slag} and \eqref{s1}, estimates for $\tilde F$ are unknown.  

\smallskip
In showing regularity for convex solutions of \eqref{s1}, the authors of \cite{CSY} managed to avoid a priori estimates altogether.  The basic idea was to change variables using the Lewy-Yuan rotation of the gradient graph, $\bar x+iD\bar u(\bar x)=e^{-i\pi/4}(x+iDu(x))$, such that the Hessian bounds decreased from $0\le \lambda\le+\infty$ to $-1\le\bar\lambda\le 1$. Since minimal graphical tangent cones with such bounds are flat, they were able to deduce regularity in new coordinates using ideas from \cite{Y}. One problem, however, is that defining such a rotation is unclear if convex $u$ is not $C^1$.  To adapt to the lower regularity setting, it was shown in \cite{CSY} that in the smooth case, $\bar u(\bar x)$ can be constructed using the Legendre transform, which is still well defined in the Lipschitz case.

\begin{comment}
\smallskip
 %Boundary regularity for the Monge Ampere equation was studied by Trudinger-Wang \cite{TW} and Savin \cite{S}. 
%In dimension two, Heinz \cite{H} derived a Hessian bound for a generalized Monge-Amp\'ere type equation which includes (\ref{s1}).Hessian estimates for solutions to the $\sigma_k$ equations for $k\geq 2$ were established by %Pogorelov \cite{P2} and 
%Chou-Wang \cite{ChW} under certain strict convexity constraints. For the three dimensional case, Trudinger \cite{T2}, Urbas \cite{U2,U3}, and Bao-Chen \cite{BC} obtained pointwise Hessian estimates in terms of certain integrals of the Hessian, for $\sigma_k$ equations and the special Lagrangian equation (\ref{s1}) with $\Theta=\pi$ respectively.  Bao-Chen-Guan-Ji \cite{BCGJ} obtained pointwise Hessian estimates for strictly convex solutions to quotient equations $\sigma_n=\sigma_k$ in terms of certain integrals of the Hessian. Hessian estimates for convex solutions of general quadratic Hessian equations were obtained via a new pointwise approach by Guan-Qiu \cite{GQ}. %Hessian estimates for semiconvex solutions of $\sigma_2(D^2u)=1$ were recently established by Shankar-Yuan in \cite{RY1}.%\\

%\medskip
%There have been attempts for the regularity of convex viscosity solutions of (\ref{s1}) (see: \cite{WYJ} for smooth convex solutions, \cite{WY,WY9, WaY} for critical and supercritical cases). For critical and super critical phases one can smoothly solve the Dirichlet problem for equation (\ref{s1}) on any interior small ball, with smooth boundary data approximating the continuous viscosity solution on the boundary in $C^0$ norm. The a priori estimates in \cite{WY,WangY} depend only on $C^0$ norm of the continuous viscosity solution on the boundary, which allows one to find a smooth limit of the $C^0$ viscosity solutions. Hence, the regularity for viscosity solutions follows. As of now, it is unknown if there exists smooth convex solutions of (\ref{s1}) for a subcritical phase with smooth boundary data approximating the convex viscosity solution in $C^0$ norm. Smooth solutions with the approximated smooth boundary data to Dirichlet problem for a modified concave equation $\Tilde{F}(D^2u)=\sum_{i=1}^n f( \lambda_i)-\Theta=0$, where $f(\lambda)=\arctan\lambda$ for $\lambda \geq 0$ and $\lambda$ for $\lambda<0$ may not be convex unless there exits similar a priori estimates. %\\
\end{comment}

\smallskip
Extending the constant phase \eqref{s1} results to variable phases \eqref{slag} is subtle, and not always possible.  Interior regularity for solutions of (\ref{slag}) with a $C^{1,1}$ supercritical and critical phase were recently obtained in \cite{AB,AB2d,BMS}, but $C^{1,\alpha}$ singular solutions are known if $\psi$ has %low 
H\"older regularity \cite[Remark 1.3]{AB}.  In fact, these singular solutions are convex, so Theorem \ref{thm} is not valid unless $\psi(x)$ is sufficiently smooth.  Moreover, in our companion paper \cite{BS2}, we exhibit convex $C^{1,\alpha}$ singular solutions which solve an equation with $\psi=f(x,Du)$ depending also on the gradient.  In such cases, $f$ is smooth in both arguments.  Convex singular solutions do not exist for analogous uniformly elliptic PDEs $F(D^2u)=f(x,Du)$.  The non-uniform ellipticity of the arctangent operator in \eqref{s1} makes the PDE highly sensitive to the structure of the variable phase.  Nevertheless, the results of this paper show that \textit{no conditions} on $\psi(x)$ are needed for regularity of convex viscosity solutions, apart from some smoothness such as $C^{2,\alpha}$.

%When it comes to the in-homogeneous special Lagrangian equation (\ref{slag}), not much is known.
%\smallskip
%Extending the techniques used for the constant phase (\ref{s1}) to the variable phase (\ref{slag}) is subtle due to the highly sensitive nature of the variable phase (as indicated by counterexamples in \cite{BS2}), along with added non-uniform ellipticity of the equation. Unlike the dual Monge-Ampere equation, the ellipticity here is hidden in the sum of the arctangent of the eigenvalues, and so a lower bound in no way leads to an upper bound.  However, the existence of interior estimates for solutions of (\ref{slag}) with critical and supercritical phase where $\psi\in C^{1,\varepsilon_{0}}$ is still an open question. The difficulty is the lack of a suitable Hessian lower bound $D^2u\ge -I/\delta$, since the linearized operator $DF(\lambda)$ in \eqref{slag} can now have arbitrarily small coefficients.  In the homogeneous case, we can differentiate \eqref{s1} and recover $D^3u$, but upon differentiating \eqref{slag}, we find extra terms involving $D\psi$, which, a priori, could be large compared to the coefficients $DF(\lambda)$ of $D^3u$.  This prevents us from finding Jacobi inequalities such as $\Delta_g\ln\lambda_{max}\ge |\nabla_g\ln\lambda_{max}|^2-C$, which require two differentiations of \eqref{slag}. 

\smallskip
In proving Theorem \ref{thm}, the treatment of the variable phase is delicate, and the technique does not always work.  The key step to deduce $C^{1,1}$ regularity, or $\lambda_{max}(D^2u)<\infty$, is to show that in rotated coordinates, $\bar \lambda_{max}$ never saturates its upper bound of $1$.  For constant phases, this is done using the strong maximum principle for $\Delta_{\bar g}$-subharmonic $\bar\lambda_{max}$.  For variable phases, subharmonicity only holds up to extra terms, such as $D^2\bar\psi(e_{max},e_{max})$.  If $\psi$ is convex, then subharmonicity is restored, but Theorem \ref{thm} does not assume any conditions on $\psi$ other than $C^{2,\alpha}$ smoothness.  To handle these terms, we carefully account for the coordinate change, and relate $D^2\bar\psi$ to $D^2\psi$, for example.  It turns out that the resulting expression vanishes when $\bar\lambda_{max}$ saturates its upper bound, restoring the maximum principle.  To understand how this could happen, observe that for a convex singular solution, the forward map $\bar x=x\cos\alpha+Du(x)\sin\alpha$ has large Jacobian matrix in the $\lambda_{max}$ directions.  It follows that the inverse map has small Jacobian in those directions, so the rotated phase $\bar\psi(\bar x)=\psi\circ x(\bar x)$ will inherit this flatness.

\smallskip
This paper is divided into the following sections.  In section \ref{sec:rot}, we formulate the Lewy-Yuan rotation for the Lipschitz potential $u(x)$. In section \ref{sec:reg_baru}, we establish regularity of the rotated potential $\bar u(\bar x)$. In section \ref{sec:reg}, we deduce regularity of the original potential $u(x)$, thereby proving Theorem \ref{thm}. In section \ref{sec:comp}, we prove the Hessian estimate \eqref{hess_est}.% and in section \ref{sec:last}, we prove Theorem \ref{thm:2nd} \\
 
\medskip
\textbf{Acknowledgments.} The authors are grateful to Yu Yuan for his guidance, support, and helpful discussions. The authors thank D. H. Phong for helpful comments and suggestions. AB acknowledges the support of the AMS-Simons Travel Grant. RS was partially supported by the NSF Graduate Research Fellowship Program under grant No. DGE-1762114. The authors thank the anonymous referee for the referee's thorough feedback.

\section{Rotation for Lipschitz potential}
\label{sec:rot}
We formulate Lewy-Yuan rotation for the convex function $u$ solving (\ref{slag}), using the idea introduced in \cite{CSY}; we refer to sections 2.1 and 2.2 in that paper for various details in this section. If $u(x)$ is smooth on $\Omega$, then the gradient graph $z=(x,Du(x))$ is a Lagrangian submanifold of $\mathbb C^n$, and if $u$ is convex, then the downward rotation $\bar z=e^{-i\pi/4}z$ yields another Lagrangian submanifold $(\bar x,D\bar u(\bar x))$ with smooth potential $\bar u(\bar x)$ on $\bar x(\Omega)$.  Because the canonical angles decrease by an angle of $\pi/4$
$$
\arctan\bar\lambda_i(D^2\bar u)=\arctan\lambda_i(D^2u)-\pi/4,
$$
it follows that the rotation sends solutions of \eqref{slag} to solutions $\bar u(\bar x)$ of another Lagrangian mean curvature equation, which now is uniformly elliptic by the Hessian bounds
\begin{align}
\label{bounds}
-I_n\le D^2\bar u\le I_n.
\end{align}
This makes regularity theory tractable in new coordinates using the ideas developed in \cite{Y}.

\smallskip
If we do not assume that $u$ is in $C^1$, then the subdifferential $(x,\pd u(x))$, i.e., the slopes of tangent planes touching $u$ from below at $x$,
is not a graph over $\Omega$ (for example, if $u(x)=|x|$, then $\pd u(0)=\bar B_1(0)$).
Nevertheless, if we rotate the subdifferential $(x,\pd u(x))$ downwards by $\pi/4$, then we still obtain a gradient graph $(\bar x,D\bar u(\bar x))$, where $\bar u\in C^{1,1}$ and is given explicitly by
\begin{equation}
\label{r}
    s\bar{u}(\bar{x})-\frac{c}{2}|\bar x|^2=-\left[su(x)+\frac{c}{2}|x|^2\right]^*(\bar x),\qquad \bar x\in \bar x(\Omega),
\end{equation}
where $c=\cos\pi/4$, $s=\sin\pi/4$, and
$$
f^*(\bar x)=\sup_{x}[x\cdot\bar x-f(x)],\qquad \bar x\in\pd f(\Omega)
$$
is the Legendre transform of convex $f(x)$; see \cite[Proposition 2.1]{CSY}.  The image domain $\bar\Omega=\bar x(\Omega)=\pd \tilde u(\Omega)$ is open and connected by \cite[Lemma 2.1]{CSY}, %on using which we get  
which comes from the fact that
\begin{equation}
\tilde u(x):=su+\frac{c}{2}|x|^2 \label{u1}
\end{equation}
is strictly convex.  

\medskip
The transform \eqref{r} is order preserving, $f\le g\to \bar f\le \bar g$, hence, it preserves uniform limits.  It follows that $\bar u(\bar x)$ satisfies the same Hessian bounds \eqref{bounds} as in the smooth case.  We could then use interpolation to see that $D\bar u_n\to D\bar u$ locally uniformly provided $u_n\to u$ uniformly, but this also follows from strict convexity, as in \cite[Proposition 2.3]{CSY}, i.e. if $\bar x\in \pd \tilde u(a)\cap\pd \tilde v(b)$ for $u,v$ convex, then
\begin{align}
\label{xsmall}
|b-a|^2\le 2\sqrt 2(|\tilde u-\tilde v|(a)+|\tilde u-\tilde v|(b)).
\end{align}
The smallness of $|D\bar u-D\bar v|$ would then follow from reverse rotation $a=-s\bar x+cD\bar u$.

%The bulk of this section is the same as section 2.1 and section 2.2 (except proposition 2.2 and 2.3) of \cite{CSY} and so we direct our readers towards the same. 

%In \cite[Section 2.2]{CSY}, it was shown that an $\alpha$-rotation makes sense if $u(x)+\frac{1}{2}\cot\alpha|x|^2$ is strictly convex even without a smoothness assumption. Following the notation used in \cite{CSY}, we denote $\cos\alpha=c$ and $\sin\alpha=s$ and from \cite[Prop 2.1]{CSY} we see that $\alpha$-rotation $\bar{u}$ is given by 
%\begin{equation}
%    \bar{u}(\bar{x})=\frac{1}{2}\frac{c}{s}|\bar{x}|^2-\frac{1}{s}[\bar{x}\dot x-(su(x)+\frac{c}{2}|x|^2)]^* ,\label{r}
%\end{equation}
%where

\smallskip
Following \cite[Propositions 2.2,2.3]{CSY}, we show that viscosity solutions are preserved under this rotation.

\begin{proposition}
Let $u(x)$ be a convex viscosity solution of (\ref{slag}) in $B_{1.2}(0)$. Then the $\pi/4$-rotation $\bar u$ in (\ref{r}) is a corresponding viscosity solution of 
\begin{equation}
    \bar F(D^2\bar{u})=\sum_{i=1}^n \arctan \bar{\lambda}_i(D^2\bar{u})=\bar\psi(\bar x,D\bar u)=\psi(x)-n\pi/4 \label{rs}
\end{equation}
in open $\bar{\Omega}=\partial \Tilde{u}(B_1(0))$, where $\tilde{u}$ is as defined in (\ref{u1}).
\end{proposition}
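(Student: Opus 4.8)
The plan is to reduce everything to the smooth case and then pass to the limit using stability of viscosity solutions together with the order-preserving property of the Legendre transform \eqref{r}. First I would approximate $u$ by a sequence $u_k$ of smooth convex functions solving \eqref{slag} with phases $\psi_k \to \psi$ (e.g.\ by mollification of the boundary data on slightly smaller balls and solving the Dirichlet problem for the concave equation $\tilde F$ with the corresponding Hessian-lower-bound making it genuinely concave); more simply, since the claim is about viscosity solutions, I only need $u_k \to u$ locally uniformly with each $u_k$ a convex \emph{viscosity} solution, or I can invoke that $u$ itself, being a convex viscosity solution, is the local uniform limit of its own sup-convolutions which solve the equation with an error tending to $0$. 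For each smooth (or regularized) $u_k$, the rotation identity on canonical angles, $\arctan\bar\lambda_i(D^2\bar u_k)=\arctan\lambda_i(D^2 u_k)-\pi/4$, holds pointwise, so summing over $i$ gives that $\bar u_k$ is a classical solution of \eqref{rs} with $\bar\psi_k(\bar x)=\psi_k(x)-n\pi/4$, where $x = x(\bar x)$ is the reverse rotation $x = -s\bar x + cD\bar u_k(\bar x)$; this is exactly the content of \cite[Propositions 2.2, 2.3]{CSY} in the constant-phase case, adapted by carrying the $\psi_k(x(\bar x))$ term along.

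Next I would pass to the limit. By \eqref{r} the map $u \mapsto \bar u$ is order preserving and hence preserves uniform limits, so $\bar u_k \to \bar u$ locally uniformly on $\bar\Omega$; moreover by \eqref{xsmall} (strict convexity of $\tilde u$) together with the reverse rotation $a = -s\bar x + cD\bar u$, we get $D\bar u_k \to D\bar u$ locally uniformly, and in particular $x_k(\bar x) = -s\bar x + cD\bar u_k(\bar x) \to x(\bar x) =: x(\bar x)$ locally uniformly. Since $\psi_k \to \psi$ uniformly and $\psi$ is continuous, the phases converge, $\bar\psi_k(\bar x) = \psi_k(x_k(\bar x)) - n\pi/4 \to \psi(x(\bar x)) - n\pi/4 = \bar\psi(\bar x, D\bar u(\bar x))$, locally uniformly on $\bar\Omega$. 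Now the standard stability theorem for viscosity solutions of the uniformly elliptic equation $\bar F(D^2\bar u) = \bar\psi$ (uniform ellipticity being guaranteed by the Hessian bounds \eqref{bounds}, which survive the limit as noted after \eqref{u1}) applies: the local uniform limit of solutions $\bar u_k$ with locally uniformly convergent right-hand sides $\bar\psi_k$ is a viscosity solution of the limit equation. The openness and connectedness of $\bar\Omega = \pd\tilde u(B_1)$ is \cite[Lemma 2.1]{CSY}.

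There is one structural point that needs care, and I expect it to be the main obstacle: the rotated equation \eqref{rs} has a right-hand side $\bar\psi(\bar x, D\bar u)$ that depends on $D\bar u$ through the reverse rotation, so \eqref{rs} is not of the pure form $\bar F(D^2\bar u) = h(\bar x)$ but rather $\bar F(D^2\bar u) = h(\bar x, D\bar u)$. To test the viscosity property directly (rather than only through approximation) one must check: if $P$ is a quadratic polynomial touching $\bar u$ from above at $\bar x_0$, then $\bar F(D^2 P) \ge \psi(x_0) - n\pi/4$ where $x_0 = -s\bar x_0 + cDP(\bar x_0)$ — and crucially $DP(\bar x_0) = D\bar u(\bar x_0)$ since $\bar u \in C^{1,1} \subset C^1$, so the gradient argument of $\bar\psi$ is unambiguous and the dependence on $D\bar u$ causes no genuine trouble at the point of contact. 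The remaining work is to translate a touching polynomial for $\bar u$ at $\bar x_0$ into a touching polynomial (or subdifferential test) for $u$ at the corresponding $x_0 \in \pd\tilde u^{-1}$, using that rotation by $e^{-i\pi/4}$ is a linear symplectic map on $\mathbb C^n$ carrying gradient graphs to gradient graphs and respecting the ordering of functions along the rotated coordinates; this is the geometric heart of \cite[Propositions 2.2, 2.3]{CSY}, and the only new ingredient here is bookkeeping the phase value $\psi(x_0)$ along the map, which is continuous and hence harmless. Given all this, the approximation argument of the previous two paragraphs is the cleanest route, since it avoids re-deriving the geometric correspondence and only uses stability plus the already-established convergence \eqref{xsmall}.
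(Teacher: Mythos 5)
There is a genuine gap, and it sits exactly at the point your proposal leans on hardest: the existence of smooth convex solutions $u_k$ of \eqref{slag} (or of nearby equations) converging to $u$. Solving the Dirichlet problem for the modified concave operator $\tilde F$ with mollified boundary data does not produce such a sequence: the solution of that problem need not be convex and a priori estimates for $\tilde F$ are not available, so it has no connection to $u$ --- this is precisely the obstruction described in the introduction of the paper, and it is why the whole rotation strategy is used instead of approximation by smooth solutions. Your fallback, sup-convolution (or mollification $u*\rho_\varepsilon$), only works one-sided: because $F$ is concave on convex functions, $u*\rho_\varepsilon$ is a viscosity \emph{subsolution} of $F(D^2u_\varepsilon)\ge\psi_\varepsilon$ (this is \cite[Theorem 5.8]{CC}), but it is not a supersolution, and sup-convolutions likewise preserve only the subsolution property. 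So the limiting argument you describe can at best show that $\bar u$ is a subsolution of \eqref{rs}; the supersolution half cannot be reached by any of the approximations you propose. This is exactly how the paper's proof is structured: the subsolution claim is proved by mollification plus rotation of the smooth $\bar u_\varepsilon$ and stability (\cite[Proposition 2.9]{CC}), while the supersolution claim is proved directly, by taking a quadratic $\bar P$ touching $\bar u$ from below, lowering it by $\varepsilon|\bar x|^2$ so that $D^2\bar P<I_n$, reverse-rotating it to a genuine quadratic $P$ touching $u$ from below, and using that $u$ is a viscosity supersolution of \eqref{slag}.

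Your closing paragraph does sketch a direct test-function verification, but only for polynomials touching $\bar u$ from \emph{above}, i.e.\ for the subsolution property --- which is the side that does not need it --- and that direct route has its own problem: a polynomial $P$ touching from above need only satisfy $D^2P\ge D^2\bar u\ge -I_n$, so it may have eigenvalues $\ge 1$, in which case the reverse rotation of $P$ is not a finite quadratic, and you cannot fix this by perturbation (adding $\varepsilon|\bar x|^2$ preserves touching from above but increases the Hessian; subtracting it destroys the touching). This is why the paper routes the subsolution side through concavity and mollification rather than through un-rotating test functions. In short: the two-sided approximation you want does not exist, and the correct proof is asymmetric --- direct test functions (with the $D^2\bar P<I_n$ perturbation) for the supersolution direction, mollification plus stability for the subsolution direction.
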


\begin{proof} We first prove the following claim. 

\begin{claim} \label{1}
We show that $\bar u$ is a supersolution of (\ref{rs}) in $\bar{\Omega}$.
\end{claim}
Let $\bar{P}$ be a quadratic polynomial touching $\bar{u}$ from below locally
somewhere on the open set $\bar{\Omega},$ say at the origin
$\bar{0}\in\partial\tilde{u}(  0).$ Since at $\bar{0}$, $D^2\bar{P}\leq D^2\bar{u}\leq I_n$, we assume $D^{2}\bar{P}<I_n$, by subtracting $\varepsilon|\bar x|^2$ from $\bar{P}$, and then letting $\varepsilon\rightarrow 0$. This
guarantees the existence of its pre-rotated quadratic polynomial $P,$ and also confirms that
$\bar{P}$ touches $\bar{u}$ from below in an open neighborhood of the closed set $\partial\tilde{u}(  0).$  We still have $D\bar P(\bar 0)=D\bar u(\bar 0)$.  Using the order preservation
of $\alpha$-rotation that is also valid for reverse rotation, and continuity of the gradient mapping $\partial\tilde{u}$ in \cite[Corollary 24.5.1]{R}, we see that the pre-rotated quadratic polynomial $P$ touches $u$ at $x=0$ from below in an open neighborhood of $0\in B_{1}(  0).$ Using the fact that $u$ is a supersolution of (\ref{slag}) we get 
\[\sum_{i=1}^n\arctan\lambda_i(D^2P)\leq \psi(0), 
\]which in turn implies 
\[\bar F(\bar D^2\bar P)=\sum_{i=1}^n\arctan\bar \lambda_i(D^2\bar P)\leq \psi(0)-n\pi/4=\bar\psi(\bar 0,D\bar P(\bar 0)).
\]
Therefore, the claim holds good. 
\begin{claim}\label{2}
We show that $\bar u$ is a subsolution of (\ref{rs}) in $\bar{\Omega}$.
\end{claim}

The first part of this proof is the same as \cite[Prop 2.3, Step1]{CSY} where a convolution $u_\varepsilon$ of $u$ is considered and the smooth $\alpha$-rotation $\bar{u}_\varepsilon$ is shown to be well defined on $\overline{\Omega}=\partial \tilde{u}(B_1(0))\subset D\tilde{u}_\varepsilon (B_{1.1}(0))$ for $\varepsilon>0$ small enough.\\
Next, observe that (\ref{slag}) is concave when $u$ is a convex function. Applying \cite[Theorem 5.8]{CC}, we see that the solid convex average $u_\varepsilon=u*\rho_\varepsilon$ is a subsolution of 
\[F(D^2u_\varepsilon)\ge\psi_\varepsilon(x)=\psi*\rho_\varepsilon(x)
\] for $\varepsilon>0$ small enough. Combining \cite[Prop 2.1]{CSY} with the first part of this proof, we see that the smooth $\alpha$-rotation $\bar{u}_\varepsilon$ is a subsolution of 
\begin{equation*}
    \bar{F}(D^2\bar{u}_\varepsilon)=\sum_{i=1}^n \arctan \bar{\lambda}_i(D^2\bar{u}_\varepsilon)\ge \bar\psi_\varepsilon(\bar{x}_{\varepsilon},D\bar u_{\varepsilon})=\psi_\varepsilon(x_\e)-n\alpha,
\end{equation*}
in $\overline{\Omega}$, where $x_\e=c\bar x-s D\bar u_\e(\bar x)$.  By \eqref{xsmall}, $x_\e\to x=c\bar x-s\,D\bar u(\bar x)$ uniformly on $\bar\Omega$, so $\psi_\e(x_\e(\bar x))\to\psi(x(\bar x))$ uniformly on $\bar\Omega$.   Since $\bar u_\e\to \bar u$ uniformly on $\bar\Omega$, and they are viscosity subsolutions of locally uniformly convergent equations $\bar F(D^2\bar u_\e)=f_\e(\bar x)$, it follows from \cite[Proposition 2.9]{CC} that $\bar u$ is a subsolution of the limiting equation \eqref{rs}.  

%Using the fact that viscosity solutions are closed under $C^0$ uniform convergence along with the argument in \cite[Prop 2.3,Step1]{CSY} which shows the uniform convergence of $D\bar u_\varepsilon$, we conclude that the uniformly convergent limit $\lim_{\varepsilon\rightarrow0}\bar u_\varepsilon=\bar u$ is a subsolution of (\ref{rs}) in $\bar{\Omega}$. \\

\smallskip
Therefore, from claims \ref{1} and \ref{2}, we see that $\bar u$ is a viscosity solution of (\ref{rs}) in $\bar{\Omega}$.
\end{proof}

In the remainder, we abbreviate the rotated phase by
\eqal{
\label{barpsi}
\bar\psi(\bar x)&:=\bar\psi(\bar x,D\bar u(\bar x))=\psi(x(\bar x,D\bar u(\bar x)))-n\pi/4\\
&=\psi\left(\frac{\bar x-D\bar u(\bar x))}{\sqrt 2}\right)-n\pi/4.
}
In particular, since $\bar u\in C^{1,1}$, we see that $\bar\psi$ is Lipschitz, so far.

\section{Regularity of the rotated potential}
\label{sec:reg_baru}
We first show that $\bar u\in VMO(B_{1/2})$ followed by $\bar u\in C^{2,\alpha}(B_{1/2})$.
We recall the notion of VMO functions. %define $VMO$ functions.
\begin{definition}[Vanishing mean oscillation]
Let $\Omega\subset\mathbb{R}^n$. A locally integrable function $v$ is in $VMO(\Omega)$ with modulus $\omega(r,\Omega)$ if
\[\omega(r,\Omega)=\sup_{x_0\in \Omega,0<r\leq R}\frac{1}{|{B_r(x_0)\cap \Omega}|} \int_{B_r(x_0)\cap \Omega} |v(x)-v_{x_0,r}|\rightarrow 0, \text{ as $r\rightarrow 0$}
\]
where $v_{x_0,r}$ is the average of $v$ over $B_r(x_0)\cap \Omega.$
\end{definition}

\begin{proposition}[VMO Estimates]
\label{prop:vmo}
Let $\bar u$ be a $C^{1,1}$ viscosity solution of (\ref{rs}) in $B_{1}(0)\subset \mathbb R^n$, where $|D^2\bar u|\leq 1$ and $ \bar\psi(\bar x)$ is %%Lipschitz 
continuous. Then $D^2 \bar u \in VMO(B_{1/2})$ and the $VMO$ modulus of $\bar u$, denoted by $\omega(r)\rightarrow 0$ as $r\rightarrow 0$.
\end{proposition}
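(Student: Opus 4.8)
\emph{Strategy.} The estimate will follow from a blow-up (compactness) argument in the spirit of Caffarelli's perturbation theory, the role of the ``frozen'' model being played by the constant-phase special Lagrangian equation $\sum_i\arctan\mu_i=\mathrm{const}$ restricted to the range $-I_n\le D^2\le I_n$, for which a satisfactory interior theory is available through \cite{CSY,Y}. Suppose the conclusion fails: then there are $\e_0>0$, points $\bar x_k\in B_{1/2}$ with $\bar x_k\to\bar x_0\in\overline{B_{1/2}}$, and radii $r_k\to 0$ with
\[
\frac{1}{|B_{r_k}(\bar x_k)|}\int_{B_{r_k}(\bar x_k)}\bigl|D^2\bar u-(D^2\bar u)_{\bar x_k,r_k}\bigr|\ge\e_0.
\]
Set $v_k(y):=r_k^{-2}\bigl[\bar u(\bar x_k+r_ky)-\bar u(\bar x_k)-r_k D\bar u(\bar x_k)\cdot y\bigr]$ on $B_{R_k}$, with $R_k:=(1-|\bar x_k|)/r_k\to\infty$; then $v_k(0)=0$, $Dv_k(0)=0$, $D^2v_k(y)=D^2\bar u(\bar x_k+r_ky)$, so $|D^2v_k|\le 1$, the averaged integral is scale invariant so $\tfrac{1}{|B_1|}\int_{B_1}|D^2v_k-(D^2v_k)_{0,1}|\ge\e_0$, and, since $\bar u\in C^{1,1}$ is everywhere differentiable, $v_k$ is a viscosity solution of
\[
\bar F(D^2v_k)=\bar\psi\bigl(\bar x_k+r_ky,\,D\bar u(\bar x_k)+r_k Dv_k(y)\bigr)=:h_k(y).
\]

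\emph{Passage to the limit.} The uniform bounds $|D^2v_k|\le 1$, $v_k(0)=Dv_k(0)=0$ give, along a subsequence, $v_k\to v_\infty$ in $C^{1,\beta}_{\mathrm{loc}}(\mathbb R^n)$ for all $\beta<1$, with $v_\infty\in C^{1,1}$ and $-I_n\le D^2v_\infty\le I_n$. Because $D\bar u$ is continuous and $|Dv_k(y)|\le|y|$, we have $h_k\to\bar\psi(\bar x_0,D\bar u(\bar x_0))=:c$ locally uniformly, so by stability of viscosity solutions under locally uniform convergence of solutions and equations (\cite[Prop.~2.9]{CC}), $v_\infty$ is an \emph{entire} viscosity solution of the constant-phase equation $\sum_i\arctan\bar\lambda_i(D^2v_\infty)=c$ subject to \eqref{bounds}.

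\emph{Rigidity and contradiction.} Undoing the Lewy--Yuan rotation (rotating up by $\pi/4$) turns $v_\infty$ into an entire \emph{convex} viscosity solution of a special Lagrangian equation; by \cite{CSY} it is smooth, and then by the Bernstein-type rigidity for convex entire special Lagrangian graphs (\cite{Y}) it is a quadratic polynomial, so after rotating back $D^2v_\infty\equiv M$ for a constant matrix $M$ with $\bar F(M)=c$ and $-I_n\le M\le I_n$. Writing $w_k:=v_k-v_\infty$, uniform ellipticity of $\bar F$ on $\{-I_n\le\cdot\le I_n\}$ (its gradient has eigenvalues in $[\tfrac12,1]$ there, and the relevant convex combinations stay in this set) puts $w_k$ in the Pucci class $S(\tfrac12,1;\,|h_k-c|)$ on $B_2$; since $\|h_k-c\|_{L^\infty(B_2)}\to 0$ and $\|w_k\|_{L^\infty(B_2)}\to 0$, the $W^{2,\delta}$ estimate of Krylov--Safonov--Caffarelli gives $\|D^2w_k\|_{L^\delta(B_1)}\to 0$, hence $D^2v_k\to M$ in $L^1(B_1)$, contradicting the lower bound on the oscillation of $D^2v_k$. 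Since $\bar x_k\in B_{1/2}$ was arbitrary, this simultaneously shows that the modulus $\omega(r)$ is independent of the base point.

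\emph{Main obstacle.} The crux is the model-regularity input: that the frozen equation---the constant-phase special Lagrangian equation with $-I_n\le D^2\le I_n$---has a good theory at the viscosity level, namely smoothness and Bernstein rigidity of entire solutions. This is exactly where the minimal-surface structure of the constant-phase equation is indispensable (e.g.\ the subharmonicity of $\ln\sqrt{1+\bar\lambda_{max}^2}$ on the minimal gradient graph together with the strong maximum principle, as in \cite{CSY,Y}), and it is the one place the argument is not soft. The only other point requiring care is that the rotated phase $\bar\psi$ depends on $D\bar u$ rather than on $\bar x$ alone; the $C^{1,1}$ regularity of $\bar u$ neutralizes this, since then $v_k$ is genuinely a viscosity solution of an equation with an $L^\infty$ right-hand side $h_k(y)$ and $r_kDv_k\to0$.
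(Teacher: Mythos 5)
Your overall scheme is the same as the paper's: negate the VMO property, blow up at the bad points and scales, note that $|Dv_k(y)|\le |y|$ and the Lipschitz continuity of $\bar\psi$ freeze the phase so the limit $v_\infty$ is an entire constant-phase solution with $-I_n\le D^2v_\infty\le I_n$, and then upgrade the $C^{1,\beta}_{loc}$ convergence to $L^1$ convergence of Hessians via the Pucci-class $W^{2,\delta}$ estimate to contradict the oscillation lower bound. That last step matches the paper's use of \cite[Prop 7.4]{CC} (you are in fact slightly more explicit in keeping the $\|h_k-c\|_{L^n}$ term). The problem is the rigidity step, which you yourself flag as the crux but then resolve incorrectly.

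You claim that undoing the Lewy--Yuan rotation turns $v_\infty$ into an \emph{entire} convex viscosity solution of a special Lagrangian equation, to which \cite{CSY} and then Yuan's Bernstein theorem \cite{Y} apply. But with only $-I_n\le D^2v_\infty\le I_n$ the eigenvalue $1$ may be attained -- indeed $\bar\lambda_{max}=1$ is exactly the coordinate degeneracy this whole argument is meant to detect -- and then the upward rotation need not be a gradient graph over all of $\mathbb R^n$. Concretely, the up-rotated base point is $x(\bar x)=\tfrac1{\sqrt2}\bigl(\bar x-Dv_\infty(\bar x)\bigr)=D\bigl(\tfrac1{2\sqrt2}|\bar x|^2-\tfrac1{\sqrt2}v_\infty\bigr)(\bar x)$, whose Jacobian $(I_n-D^2v_\infty)/\sqrt2$ can degenerate; for $v_\infty=\tfrac12|\bar x|^2$ the image is a single point and the rotated object is a vertical plane, and in general the pre-rotated potential lives only on the gradient image of a (non-strictly) convex function, which may be bounded or lower dimensional. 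So ``entire'' fails, \cite{CSY} is only an interior statement, and Yuan's Bernstein theorem (stated for entire smooth convex solutions, and itself proved by rotating \emph{down} and running a geometric measure theory argument) cannot be invoked; nor is $v_\infty$ known to be smooth on the rotated side, since the operator is neither convex nor concave on $\{|\lambda|\le 1\}$. The paper fills this step by staying on the rotated side: it first secures $W^{2,p}_{loc}$ convergence of the blow-ups (this is needed \emph{before} the rigidity step), uses the monotonicity formula \cite[Pg 84, Theorem 19.3]{sim} to show the limit graph $L_{\bar v}$ is a minimal cone, and then uses dimension reduction, the flatness of special Lagrangian cones with $|\lambda|\le 1$ \cite[Prop 2.2]{YY}, and Allard's theorem to conclude $\bar v$ is a quadratic. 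To repair your proof you need either this cone/GMT argument or a rigidity statement valid for entire $C^{1,1}$ solutions with $|D^2v|\le 1$, which is not what you cited.
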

\begin{proof}
Suppose the contrary is true. Then we can find $\varepsilon>0$ and sequences $\{\bar x_k\rightarrow\bar x_\infty\}\subset B_{1/2}$, $\{r_k \rightarrow 0\}$, and a family of $C^{1,1}$ viscosity solutions $\{\bar u_k\}$ of (\ref{rs}) with $|D^2\bar u_k|\leq 1$, such that 
\[
\frac{1}{|B_{r_k}|}\int_{B_{r_k}}|D^2\bar u_k-(D^2\bar u_k)_{\bar x_k,r_k}|\geq \varepsilon.
\] 
Next, we blow up $\{\bar u_k\}$. For $|\bar y|\leq \frac{1}{r_k}$, we set
\[\bar v_k(\bar y)=\frac{\bar u_k(\bar x_k+r_k\bar y)-\nabla \bar u_k(\bar x_k). r_k \bar y-\bar u_k(\bar x_k)}{r_k^2}.
\]
Here, $\bar v_k$ is a viscosity solution of
\[
\sum_{i=1}^n\arctan\bar\lambda_i(D^2\bar v_k(\bar y))=\bar\psi(\bar x_k+r_k\bar y).
\]
By continuity of $\bar\psi$, the right hand side converges uniformly to $\bar\psi(\bar x_\infty)$, while the left hand side can be extended outside of $|D^2\bar v|\le 1$ to a uniformly elliptic operator.  Meanwhile, for any fixed $s>0$, we use $\|D^2\bar v_k\|_{L^\infty(B_{r_k})}\le 1$ and $\bar v_k(0)=D\bar v_k(0)=0$ to find a $C^{1,\alpha}(B_s)$ convergent subsequence.  By the diagonalization method, we find a subsequence, also denoted $\bar v_k$, which converges locally uniformly in $C^{1,\alpha}$ on $\re^n$ to $\bar v$ as $k\to\infty$.  Viscosity solutions are closed under $C^0$ uniform limits and locally uniformly convergent, uniformly elliptic sequences of PDEs \cite[Proposition 2.9]{CC}, so on any fixed ball, we find that $\bar v$ is a viscosity solution of the special Lagrangian equation \eqref{s1}
$$
\sum_{i=1}^n\arctan\bar\lambda_i(D^2\bar v(y))=\bar\psi(\bar x_\infty).
$$
We also have $W^{2,p}_{loc}$ convergence.  Applying the $W^{2,\delta}$ estimate from \cite[Prop 7.4]{CC}, we get \[ ||D^2\bar v_k-D^2\bar v||_{L^{\delta}(B_{s/2})}\leq C(s)||\bar v_k-\bar v||_{L^{\infty}(B_s)}\]where the RHS goes to $0$ as $k\rightarrow \infty$. Using the fact that $|D^2\bar v_k|, |D^2\bar v|\leq 1$, we see that for any $p>0$,  $||D^2\bar v_k-D^2\bar v||_{L^{p}(B_{s/2})}$ approaches $0$ as $k\rightarrow\infty$. 

\smallskip
Since $\bar v$ solves the special Lagrangian equation with Hessian bounds $-I_n\le D^2\bar v\le I_n$ on $\re^n$, the work of Yuan from the 2000's now applies \cite{Y}.  We refer verbatim to \cite[pg. 9, Step 1]{CSY} which shows that $\bar v$ is smooth under such conditions.  We next apply \cite[pg. 122, Step B]{YY}, which shows that a smooth entire solution of the constant phase equation with bounds \eqref{bounds} is quadratic.  We briefly summarize the idea here: Using the calibration argument, the $C^{0,1}$ gradient graph $(\bar x,D\bar v(\bar x))$ is volume minimizing.  The monotonicity formula \cite[Pg 84]{sim} combined with the flatness \cite[Prop 2.2]{YY} of graphical tangent cones with Hessian bounds $-I_n\le D^2\bar v\le I_n$, show that $\bar v$ is smooth, by applying the VMO and then $C^{2,\alpha}$ arguments from \cite{Y}.  On the other hand, the Hessian bounds also rule out tangent cones at infinity, giving the Bernstein theorem of \cite{YY}.

\smallskip
Continuing with our proof, we next use the $W^{2,p}_{loc}$ convergence and $D^2\bar v=const.$ to obtain:
\begin{align*}
0=\frac{1}{|B_1|}\int_{B_1}|D^2\bar v-(D^2\bar v)_{0,1}|=\lim_{k\rightarrow \infty}\frac{1}{|B_1|}\int_{B_1}|D^2\bar v_k-(D^2\bar v_k)_{0,1}|\\=\lim_{k\rightarrow \infty}\frac{1}{|B_{r_k}|}\int_{B_{r_k}}|D^2\bar u_k-(D^2\bar u_k)_{\bar x_k,r_k}|\ge \varepsilon,
\end{align*}
which is a contradiction. 
\end{proof}

By translation invariance of the VMO seminorms, $D^2\bar u(\bar x+h)$ will also be in $VMO(B_{1/2})$ if $h$ is small.  This means we can conclude the following but for the sake of completeness, we provide a proof similar to Proposition \ref{prop:vmo}.
\begin{corollary}
\label{cor:vmo}
Let $h\in \mathbb R^n$ be sufficiently small, and $\bar u$ and $\bar\psi$ be as defined in Proposition \ref{prop:vmo}.  Then a continuous function $K(.,.)$ of $D^2\bar u(\bar x)$ and $D^2\bar u(\bar x+h)$ is in $VMO(B_{1/2})$. 
\end{corollary}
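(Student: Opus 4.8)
\emph{Proof proposal.} The plan is to argue by contradiction and mimic the blow‑up argument of Proposition \ref{prop:vmo}, but now blowing up at two nearby centers simultaneously. Suppose $\bar x\mapsto K(D^2\bar u(\bar x),D^2\bar u(\bar x+h))$ were not in $VMO(B_{1/2})$. Then there are $\varepsilon>0$, points $\bar x_k\to\bar x_\infty\in\overline{B_{1/2}}$, and radii $r_k\to 0$ such that
\[
\frac{1}{|B_{r_k}|}\int_{B_{r_k}(\bar x_k)}\bigl|K(D^2\bar u(\bar x),D^2\bar u(\bar x+h))-c_k\bigr|\,d\bar x\ \ge\ \varepsilon ,
\]
where $c_k$ is the average of the integrand over $B_{r_k}(\bar x_k)$. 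This is where the smallness of $h$ enters: we only need $|h|<1/2$, so that $\bar x_\infty+h$ lies in the interior of $B_1$ and both $\bar u$ near $\bar x_\infty$ and its translate $\bar u(\cdot+h)$ near $\bar x_\infty+h$ are $C^{1,1}$ viscosity solutions of a Lagrangian mean curvature equation with Lipschitz phase and Hessian bounded in norm by $1$; hence Proposition \ref{prop:vmo} and its proof apply to each.

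Next I would introduce the two rescalings centered at $\bar x_k$,
\[
\bar v_k(\bar y)=\frac{\bar u(\bar x_k+r_k\bar y)-D\bar u(\bar x_k)\cdot r_k\bar y-\bar u(\bar x_k)}{r_k^2},\qquad
\bar w_k(\bar y)=\frac{\bar u(\bar x_k+h+r_k\bar y)-D\bar u(\bar x_k+h)\cdot r_k\bar y-\bar u(\bar x_k+h)}{r_k^2},
\]
so that $D^2\bar v_k(\bar y)=D^2\bar u(\bar x_k+r_k\bar y)$ and $D^2\bar w_k(\bar y)=D^2\bar u(\bar x_k+h+r_k\bar y)$, and both families are locally bounded in $C^{1,1}$. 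Running the blow‑up argument of Proposition \ref{prop:vmo} once at the centers $\bar x_k$ and once at the centers $\bar x_k+h$ — the variable phase rescales to a constant, so the limiting gradient graphs are minimal cones with Hessian bounded by $1$, hence flat — gives, along a subsequence, $\bar v_k\to\bar v$ and $\bar w_k\to\bar w$ in $W^{2,p}_{loc}$ with $\bar v,\bar w$ quadratic polynomials. Consequently $D^2\bar v_k\to A$ and $D^2\bar w_k\to B$ in $L^p_{loc}$ for constant symmetric matrices $A,B$ with $|A|,|B|\le1$.

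Finally I would pass to the limit in the contradiction inequality. Since $D^2\bar v_k\to A$ and $D^2\bar w_k\to B$ in measure on $B_1$ and $K$ is continuous (hence bounded on the compact set $\{|M_1|\le1\}\times\{|M_2|\le1\}$ of admissible Hessian pairs), the bounded convergence theorem yields $K(D^2\bar v_k(\bar y),D^2\bar w_k(\bar y))\to K(A,B)$ in $L^1(B_1)$. Changing variables $\bar x=\bar x_k+r_k\bar y$ turns the left‑hand side of the displayed inequality into $\frac{1}{|B_1|}\int_{B_1}\bigl|K(D^2\bar v_k,D^2\bar w_k)-c_k\bigr|\,d\bar y$ with $c_k$ now the average over $B_1$; since $K(D^2\bar v_k,D^2\bar w_k)$ converges in $L^1(B_1)$ to the constant $K(A,B)$, this quantity tends to $0$, contradicting the bound $\ge\varepsilon$. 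Hence $K(D^2\bar u(\cdot),D^2\bar u(\cdot+h))\in VMO(B_{1/2})$.

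I expect the only delicate point to be conceptual rather than computational: the two blow‑ups are taken at the \emph{distinct} centers $\bar x_k$ and $\bar x_k+h$, so in general $A\ne B$ and one cannot hope for a single blow‑up profile. What rescues the argument is that we need only the \emph{pair} $(A,B)$ to be constant, so that $K(A,B)$ is a single number whose mean oscillation vanishes. The accompanying bookkeeping — checking that the hypotheses of Proposition \ref{prop:vmo} are available at the shifted center $\bar x_\infty+h$ (which forces $h$ small) and that the change of variables matches averages on $B_{r_k}(\bar x_k)$ with averages on $B_1$ — is routine, and the rest is a verbatim repetition of the blow‑up/minimal‑cone/flatness chain already carried out in Proposition \ref{prop:vmo}.
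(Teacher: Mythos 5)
Your proposal is correct and follows essentially the same route as the paper: blow up at the two centers, use the Proposition \ref{prop:vmo} machinery to get quadratic limits, and note that only the constancy of the \emph{pair} of limit Hessians is needed so that $K$ becomes constant in the limit, which kills the mean oscillation. The paper's version differs only in bookkeeping (it rescales the translate $\bar u(\cdot+h)$ at the same center, and allows $h=h_k$ to vary so the VMO modulus is uniform in small $h$), but for the statement as written your argument suffices.
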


\begin{proof}
Letting $\bar u^h_k(\bar x)=\bar u_k(\bar x+ h)$, we repeat the proof of Proposition \ref{prop:vmo}, assuming instead that
$$
\frac{1}{|B_{r_k}|}\int_{B_{r_k}}|K(D^2\bar u_k,D^2\bar u^{h_k}_k)-(K(D^2\bar u_k,D^2\bar u^{h_k}_k))_{\bar x_k,r_k}|\ge\e.
$$
Rescaling as before via $\bar v_k$ and $\bar v_k^{h_k}$, we take subsequences and send $k\to\infty$ to obtain limits $\bar v$ and $\bar v^h$.  The convergence is in $W^{2,p}_{loc}(\mathbb R^n)$, so after a subsequence, we can assume $D^2\bar v_k$ and $D^2\bar v^{h_k}_k$ converge almost everywhere.  By the dominated convergence theorem and $|D^2\bar v_k|\le 1$, it follows that $K(D^2\bar v_k,D^2\bar v_k^{h_k})$ converges in $L^1_{loc}(\mathbb R^n)$.  In fact, $\bar v$ and $\bar v^h$ are quadratic polynomials, so $K(D^2\bar v,D^2\bar v^h)$ is a constant. Therefore we have
\begin{align*}
0=\frac{1}{|B_1|}\int_{B_1}|K-(K)_{0,1}|&=\lim_{k\to\infty}\frac{1}{|B_{1}|}\int_{B_{1}}|K(D^2\bar v_k,D^2\bar v_k^{h_k})-(K(D^2\bar v_k,D^2\bar v_k^{h_k}))_{0,1}|\\
&=\lim_{k\to\infty}\frac{1}{|B_{r_k}|}\int_{B_{r_k}}|K(D^2\bar u_k,D^2\bar u^{h_k}_k)-(K(D^2\bar u_k,D^2\bar u_k^{h_k}))_{\bar x_k,r_k}|\\
&\ge\e,
\end{align*}
which is a contradiction.
\end{proof}

Note that the $VMO(B_{1/2})$ seminorms of $K(D^2\bar u,D^2\bar u^h)$ are independent of $h$ if $h$ is small.  The point of $K$ is we need to take a difference quotient of \eqref{rs}.

\begin{proposition}[$C^{2,\alpha}$ Estimates]
\label{prop:C2a}
Let $\bar u$ be a $C^{1,1}$ viscosity solution of (\ref{rs}) in $B_{1}(0)\subset \mathbb R^n$, where $|D^2\bar u|\leq 1$ and $ \bar\psi$ is Lipschitz continuous. Then $\bar u \in C^{2,\alpha}(B_{1/2})$ for all $\alpha\in(0,1)$.
\end{proposition}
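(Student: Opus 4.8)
The strategy is to bootstrap from the VMO regularity of $D^2\bar u$ (Proposition \ref{prop:vmo}) to Hölder regularity via a difference-quotient argument combined with Campanato-type iteration. First I would fix a small $h\in\mathbb R^n$ and set $w^h(\bar x)=\bar u(\bar x+h)-\bar u(\bar x)$. Since $\bar u$ solves the concave uniformly elliptic equation \eqref{rs} with bounds \eqref{bounds}, subtracting the equation at $\bar x+h$ from the equation at $\bar x$ and using the mean-value form of $\bar F$ gives that $w^h$ solves a linear equation
\[
a^h_{ij}(\bar x)\,D_{ij}w^h = \bar\psi(\bar x+h,D\bar u(\bar x+h)) - \bar\psi(\bar x,D\bar u(\bar x)),
\]
where $a^h_{ij}$ is built from $DF$ evaluated along the segment joining $D^2\bar u(\bar x)$ to $D^2\bar u(\bar x+h)$ — that is, $a^h_{ij} = K(D^2\bar u(\bar x),D^2\bar u(\bar x+h))$ for a specific continuous $K$, which by Corollary \ref{cor:vmo} lies in $VMO(B_{1/2})$ with modulus independent of $h$. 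The right-hand side is controlled by $\|\bar\psi\|_{Lip}(|h| + |D\bar u(\bar x+h)-D\bar u(\bar x)|)\le C|h|$ using $|D^2\bar u|\le 1$, so it is bounded independently of $h$ after dividing by $|h|$.

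Next I would invoke the interior $W^{2,p}$ estimate for linear uniformly elliptic equations with VMO coefficients (the Chiarenza–Frasca–Longo / Caffarelli theory, or \cite[Ch. 7]{CC}): since $a^h_{ij}\in VMO$ with a uniform modulus and the right-hand side is in $L^p$ for every $p<\infty$ with norm $\le C|h|$, we get $\|D^2 w^h\|_{L^p(B_{1/2})}\le C(p)|h|$ with $C(p)$ independent of $h$. Dividing by $|h|$ and letting $h\to 0$ along coordinate directions shows $D^3\bar u\in L^p_{loc}$ for all $p$, hence by Sobolev embedding $D^2\bar u\in C^{0,\alpha}(B_{1/2})$ for every $\alpha\in(0,1)$, which is the claim. (Alternatively, one can run a Campanato iteration: the VMO-coefficient linear theory gives, on small balls, that $w^h/|h|$ is close in $L^2$-mean-oscillation to a solution of a constant-coefficient equation, whose second derivatives have decaying oscillation; iterating yields a Morrey–Campanato bound $\|D^2 w^h\|_{L^{2,n+2\alpha}}\le C|h|$, again giving $D^3\bar u\in C^{0,\alpha-1}$-type control and hence $D^2\bar u\in C^{0,\alpha}$.)

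The main obstacle is making the difference-quotient equation genuinely \emph{uniform in $h$}: one must check that the coefficients $a^h_{ij}$ stay uniformly elliptic (immediate from \eqref{bounds} and concavity of $\arctan$, since $DF(\lambda)=\mathrm{diag}(1/(1+\lambda_i^2))$ has eigenvalues in $[1/2,1]$ on the rotated range $|\lambda|\le 1$) and, more delicately, that their VMO moduli do not degenerate as $h\to 0$ — this is exactly the content of Corollary \ref{cor:vmo} and the remark following it, which is why $K$ was introduced. A second point requiring care is that $\bar\psi$ in \eqref{rs} depends on $D\bar u$ as well as $\bar x$, so the right-hand side of the difference-quotient equation involves $D\bar u(\bar x+h)-D\bar u(\bar x)$; this is harmless at this stage because $|D^2\bar u|\le 1$ makes it $O(|h|)$, but it must be tracked. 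Once $D^2\bar u\in C^{0,\alpha}$ is established, the proof is complete; the further gain to $C^{4,\alpha}$ and the Hessian bound in Theorem \ref{thm} is deferred to the later sections.
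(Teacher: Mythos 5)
Your proposal is correct and follows essentially the same route as the paper: take a difference quotient of \eqref{rs}, observe that the mean-value coefficients $K(D^2\bar u(\bar x),D^2\bar u(\bar x+h))$ are uniformly elliptic and VMO with modulus independent of $h$ by Corollary \ref{cor:vmo}, apply the Chiarenza--Frasca--Longo interior $W^{2,p}$ theory to get bounds independent of $h$, and conclude $D^2\bar u\in W^{1,p}_{loc}$ and hence $C^{\alpha}$ by Sobolev embedding. Your explicit tracking of the gradient dependence of $\bar\psi$ (giving an $O(|h|)$ right-hand side via $|D^2\bar u|\le 1$) is exactly what the paper's Lipschitz bound on $\bar\psi^h$ amounts to, so no substantive difference remains.
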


\begin{proof}
Letting $v^h=[\bar u(\bar x+h)-\bar u(\bar x)]/|h|$, we obtain the linearized equation
\begin{align}
\label{slag:lin}
F^h_{ij}v^h_{ij}=\bar\psi^h(\bar x),
\end{align}
where
$$
F^h=\int_0^1\Big(I+((1-t)D^2\bar u(\bar x)+tD^2\bar u(\bar x+h))^2\Big)^{-1}dt,\qquad \bar\psi^h(\bar x)=\frac{\bar\psi(\bar x+h)-\bar\psi(\bar x)}{|h|}
$$ and we use the Einstein summation notation.
For small $h$, equation \eqref{slag:lin} holds 
pointwise (in fact, $\bar u(\bar x)$ is twice differentiable everywhere, which is easily shown using the rescaling procedure in Proposition \ref{prop:vmo}) 
on $B_{1/2}$, and can be thought of as a linear equation for $v^h$ in nondivergence form, with, by Corollary \ref{cor:vmo}, $VMO_{loc}$ coefficients $F^h_{ij}$ and $L^\infty_{loc}$ right hand side $\bar \psi^h$, each of whose seminorms are independent of $h$.  Recalling the interior $W^{2,p}$ estimates due to \cite{CFL}, see also \cite[Theorem 2.1]{V}, (although these estimates assume solutions are in $W^{2,p}_0(\Omega)$, adding a cutoff function as in \cite[proof of Theorem 9.11]{GT} yields standard interior estimates),
%\cite[Theorem 2.6]{Kry}), 
we deduce that $\forall p$ large, $v^h\in W^{2,p}_{loc}$ inside a slightly smaller domain in $\bar x(B_1)$ with local estimates independent of $h$.  Since for difference quotient $v^h$ we have $D^2v^h\in L^p_{loc}$, it follows that $D^2\bar u\in W^{1,p}_{loc}$ for all large $p$, hence by the Sobolev Embedding Theorem $D^2\bar u\in C^{\alpha}(B_{1/2})$ for all $\alpha\in(0,1)$.
\end{proof}

Now from Schauder theory, the rotated potential $\bar u(\bar x)$ is as regular as the equation allows.

\begin{corollary}
\label{cor:higher}
Let $u$ be a convex viscosity solution of \eqref{slag} on $B_1(0)$, where $\psi(x)$ is Lipschitz.  Then rotated potential $\bar u(\bar x)$ is a $C^{2,\alpha}$ solution of \eqref{rs} on $\bar x(B_1(0))$ for all $\alpha\in(0,1)$.  If $\psi\in C^{2,\alpha}$ for some $\alpha\in (0,1)$, then $\bar u\in C^{4,\alpha}$.
\end{corollary}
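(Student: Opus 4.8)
The plan is to obtain the $C^{2,\alpha}$ conclusion as an immediate consequence of the rotation machinery of Section \ref{sec:rot} together with Proposition \ref{prop:C2a}, and then to upgrade to $C^{4,\alpha}$ by differentiating \eqref{rs} and invoking interior Schauder estimates twice.

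For the first part, after a routine rescaling so that the rotation proposition of Section \ref{sec:rot} applies, the rotated potential $\bar u$ is a $C^{1,1}$ viscosity solution of \eqref{rs} on the open set $\bar x(B_1(0))=\pd\tilde u(B_1(0))$, with $|D^2\bar u|\le 1$ from the order-preservation property recorded in Section \ref{sec:rot}. Its right-hand side is $\bar\psi(\bar x,D\bar u)=\psi(x(\bar x))-n\pi/4$, where $x(\bar x)=c\bar x-sD\bar u(\bar x)$ is the reverse rotation; since $\bar u\in C^{1,1}$ this map $x(\bar x)$ is Lipschitz, so $\bar\psi$ is Lipschitz because $\psi$ is. Covering $\bar x(B_1(0))$ by balls and applying the recentered, rescaled Proposition \ref{prop:C2a} on each then gives $\bar u\in C^{2,\alpha}_{loc}(\bar x(B_1(0)))$ for every $\alpha\in(0,1)$.

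Now assume $\psi\in C^{2,\alpha}$ and run the bootstrap. Differentiating \eqref{rs} in a coordinate direction $e$ — rigorously through difference quotients, exactly as in the proof of Proposition \ref{prop:C2a} — yields the linear equation
\[
\bar F_{ij}(D^2\bar u)\,(\pd_e\bar u)_{ij}=c\,\psi_e(x(\bar x))-s\sum_k\psi_k(x(\bar x))\,\bar u_{ke}.
\]
Since $|D^2\bar u|\le 1$ the eigenvalues of $D^2\bar u$ lie in $[-1,1]$, so in its eigenbasis $\bar F_{ij}=\mathrm{diag}(1/(1+\bar\lambda_i^2))$ has eigenvalues in $[1/2,1]$; thus the operator is uniformly elliptic, and since $\bar F$ is a smooth function of the matrix entries, its coefficients $\bar F_{ij}(D^2\bar u)$ inherit the H\"older regularity of $D^2\bar u$. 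At this stage $D^2\bar u\in C^\alpha_{loc}$, so the coefficients are $C^\alpha_{loc}$; and since $\psi_e,\psi_k\in C^{1,\alpha}$ and $x(\bar x)\in C^{1,\alpha}_{loc}$ (as $D\bar u\in C^{1,\alpha}_{loc}$), while $\bar u_{ke}\in C^\alpha_{loc}$, the right-hand side lies in $C^\alpha_{loc}$. Interior Schauder estimates \cite{GT} give $\pd_e\bar u\in C^{2,\alpha}_{loc}$, i.e. $\bar u\in C^{3,\alpha}_{loc}(\bar x(B_1(0)))$. Repeating: now $D^2\bar u\in C^{1,\alpha}_{loc}$ so the coefficients are $C^{1,\alpha}_{loc}$, and $x(\bar x)\in C^{2,\alpha}_{loc}$, whence $\psi_e(x(\bar x))\in C^{1,\alpha}_{loc}$ and $\psi_k(x(\bar x))\bar u_{ke}\in C^{1,\alpha}_{loc}$; Schauder again upgrades $\pd_e\bar u$ to $C^{3,\alpha}_{loc}$, i.e. $\bar u\in C^{4,\alpha}_{loc}(\bar x(B_1(0)))$. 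The bootstrap terminates here, since a further step would require the right-hand side to be $C^{2,\alpha}$, hence $D\psi\in C^{2,\alpha}$, i.e. $\psi\in C^{3,\alpha}$, which is beyond the hypothesis.

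Most of this is routine; the point that needs care is tracking the H\"older regularity of the gradient-dependent right-hand side $\psi(x(\bar x))=\psi(c\bar x-sD\bar u(\bar x))$ and of its first derivative $c\,\psi_e(x)-s\,\psi_k(x)\bar u_{ke}$. The reverse rotation $x(\bar x)$ is exactly one derivative less regular than $\bar u$, so each application of Schauder genuinely gains regularity only as long as $\psi$ has a derivative to spare; this is precisely what pins the conclusion at $C^{4,\alpha}$ and no higher.
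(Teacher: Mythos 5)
Your proposal is correct and follows essentially the same route as the paper: apply Proposition \ref{prop:C2a} after noting that $\bar\psi(\bar x)=\psi(c\bar x-sD\bar u(\bar x))-n\pi/4$ is Lipschitz when $D\bar u$ is, and then bootstrap by differentiating \eqref{rs} (via difference quotients, i.e.\ equation \eqref{slag:lin}) and applying interior Schauder estimates twice, tracking that the gradient-dependent right-hand side gains one derivative at each stage exactly as the paper does.
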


Indeed, since $\bar u\in C^{1,1}$, we have $D\bar u\in C^{0,1}$ and $\bar\psi\in C^{0,1}$, and Proposition \ref{prop:C2a} applies, so $D\bar u\in C^{1,\alpha}$.  If we also know $\psi\in C^{2,\alpha}$, then this means $\bar\psi\in C^{1,\alpha}$. Recalling equation \eqref{slag:lin} for the difference quotient of $\bar u$, it follows from Schauder estimates that $D\bar u\in C^{2,\alpha}$, so $\bar\psi\in C^{2,\alpha}$.  Taking two difference quotients in a similar way, we deduce $D^2\bar u\in C^{2,\alpha}$.

%If phase $\psi$ is constant, then $C^{1,\alpha}$ solutions were constructed in \cite{WdY} which, after a downward rotation, are regular.  

\begin{comment}
\begin{remark}
For use in section \ref{sec:comp}, we mention that the Hessian bound $|D^2\bar u|\le 1$ ensures that the $C^{2,\alpha}$ seminorms of $\bar u(\bar x)$ on compactly contained subdomains of $\bar x(B_1(0))$ depend universally: only on the dimension, $\alpha$, the Lipschitz seminorms of $\psi(x)$, and the subdomains.  Indeed, in Proposition \ref{prop:vmo}, the VMO seminorms have universal dependence, since they were obtained by a compactness argument, and similarly with Corollary \ref{cor:vmo}.  And in Proposition \ref{prop:C2a}, the local $W^{2,p}$ estimates depend only on the VMO seminorms, the Lipschitz seminorms of $\psi$, the subdomains, and $p$, so the same will be true of the corresponding local $C^{2,\alpha}$ estimates.
\end{remark}
\end{comment}

%Let $u$ be a $C^{1,1}$ viscosity solution of (\ref{rs}) . Then for all quadratic polynomial $P$ satisfying $|D^2P|\leq ||D^2 u||_{L^{\infty}}=1$, we can modify $P$ to $\Tilde{P}=P+\frac{1}{2}sx^2$ so that 
%\begin{align*}
   % F(D^2\Tilde{P})=\psi\\
   % |s|\leq C(n)||u-P||_{L^{\infty}(B_1)}\\
    %||u-\Tilde{P}||_{L^{\infty}(B_1)}\leq C(n)||u-P||_{L^{\infty}(B_1)}.
%\end{align*}

\section{Proof of Theorem \ref{thm}}
%, regularity}
\label{sec:reg}
%Regularity of the original potential}

The final step is to show that $\bar\lambda_{max}<1$ on $\bar x(B_1)$.  Indeed, once we prove this, we can compare with quadratics to convert to original variables.  Let $\bar Q(\bar x)$ touch $\bar u$ from above near $\bar x$ and suppose, after lowering, that $D^2\bar Q<I_n$.  By the order preservation of rotations, we see that $Q(x)$ touches $u$ from above near $x$.  Moreover, $D^2Q<\infty$ by the transformation law 
$$
D^2Q(x)=\frac{I+A}{I-A}\,\qquad A=D^2\bar Q(\bar x).
$$
Thus $\lambda_{max}<\infty$ on $B_1$, i.e. $u\in C^{1,1}(B_1)$.  This means $\bar x(x)=cx+sDu(x)$ is Lipschitz, so the above formula with $Q$ replaced by $u$ implies $u\in C^{2,1}(B_1)$. This means $\bar x\in C^{1,1}$, and as before, we deduce $u\in C^{3,1}(B_1)$.  Since $D^2\bar u\in C^{2,\alpha}(B_1)$ and $\bar x\in C^{2,1}$, we conclude that $u\in C^{4,\alpha}(B_1)$.  
We now establish the desired inequality. 
\begin{remark}
We need $\psi \in C^{2,\alpha}$ only for the following proposition. 
%The case where the phase has a lower regularity is dealt with in \cite{BS3}.
\end{remark}

\begin{proposition}
\label{prop:max}
Let $u$ be a convex viscosity solution of \eqref{slag} on $B_1(0)$, with $\psi(x)\in C^{2,\alpha}$.  Then $\bar\lambda_{max}<1$ on $\bar x(B_1(0))$.
\end{proposition}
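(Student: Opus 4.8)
By Corollary~\ref{cor:higher}, $\bar u\in C^{4,\alpha}(\bar\Omega)$ where $\bar\Omega=\bar x(B_1(0))$, so $\bar\lambda_{max}=\bar\lambda_{max}(D^2\bar u(\cdot))$ is a continuous function on the open connected set $\bar\Omega$ with values in $[-1,1]$, and every computation below is classical. The plan is to argue by contradiction: suppose $\bar\lambda_{max}(\bar x_0)=1$ for some $\bar x_0\in\bar\Omega$. On a neighborhood $U\subset\bar\Omega$ of $\bar x_0$ we have $\bar\lambda_{max}\ge 1/2$, so $\bar b:=\ln\sqrt{1+\bar\lambda_{max}^2}$ is a strictly increasing function of $\bar\lambda_{max}$ there and attains at $\bar x_0$ its global maximum value $\ln\sqrt2$. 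I would show that on $U$,
\begin{align}
\label{pf:jacobi}
\Delta_{\bar g}\bar b\ \ge\ \langle a(\bar x),\nabla_{\bar g}\bar b\rangle_{\bar g}-c_0(\bar x)\,(\ln\sqrt2-\bar b)^2,
\end{align}
where $\bar g=I_n+(D^2\bar u)^2$ is the induced metric, $a$ a locally bounded vector field and $c_0\ge0$ locally bounded. Granting \eqref{pf:jacobi}, set $w=\ln\sqrt2-\bar b\ge0$; since $w$ is bounded, $w$ is a nonnegative supersolution on $U$ of the uniformly elliptic operator $\Delta_{\bar g}-\langle a,\nabla_{\bar g}\,\cdot\,\rangle_{\bar g}-c_0\,\|w\|_\infty$, whose zeroth order coefficient is $\le 0$, and $w(\bar x_0)=0$; by the strong maximum principle $w\equiv0$, i.e.\ $\bar\lambda_{max}\equiv1$, near $\bar x_0$. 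Hence $\{\bar\lambda_{max}=1\}$ is open, and it is closed in $\bar\Omega$ by continuity; being nonempty, $\bar\lambda_{max}\equiv1$ on all of $\bar\Omega$.

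To contradict this, pick $x_0\in B_1$ where the convex function $u$ is twice differentiable (a.e., by Alexandrov). Then $\bar x_0:=c\,x_0+s\,Du(x_0)\in\partial\tilde u(B_1)=\bar\Omega$, and since $\bar u$ is the transform \eqref{r} of $u$ and is $C^{4,\alpha}$, differentiating the reverse-rotation relations $x(\bar x)=c\bar x-sD\bar u$, $Du(x)=s\bar x+cD\bar u$ gives $D^2\bar u(\bar x_0)=(c\,D^2u(x_0)-sI_n)(s\,D^2u(x_0)+cI_n)^{-1}$; since $D^2u(x_0)\ge0$ is finite and $c=s$, this matrix has eigenvalues $\bar\lambda_i=(\lambda_i-1)/(\lambda_i+1)<1$. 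This contradicts $\bar\lambda_{max}(\bar x_0)=1$, finishing the proof modulo \eqref{pf:jacobi}.

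For \eqref{pf:jacobi}, since $\bar\lambda_{max}$ fails to be smooth only where the top eigenvalue is not simple, it suffices (by a standard approximation handling eigenvalue crossings) to verify \eqref{pf:jacobi} at a point where $\bar\lambda_{max}=\bar\lambda_1$ is simple; diagonalize $D^2\bar u$ there with $\bar\lambda_1=\bar u_{11}$. As in \cite[Proposition 4.1]{AB}, building on the constant-phase computations of \cite{Y,WYJ},
\begin{align}
\label{pf:decomp}
\Delta_{\bar g}\bar b=(\text{constant-phase terms})+\frac{\bar\lambda_1}{1+\bar\lambda_1^2}\,\bar\psi_{\bar1\bar1}-\bar\lambda_1\,\langle\nabla_{\bar g}\bar\psi,\nabla_{\bar g}\bar b\rangle_{\bar g}.
\end{align}
The constant-phase terms are $\ge0$: this is the classical subharmonicity estimate, which uses the rotated Hessian bounds $|\bar\lambda_i|\le1$. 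The last term of \eqref{pf:decomp} is already $\langle a,\nabla_{\bar g}\bar b\rangle_{\bar g}$ with $a=-\bar\lambda_1\nabla_{\bar g}\bar\psi$ locally bounded, since $\bar\psi=\psi\circ\Phi-n\pi/4\in C^{2,\alpha}(\bar\Omega)$ with $\Phi(\bar x)=c\bar x-sD\bar u(\bar x)$ and $\bar u\in C^{4,\alpha}$.

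The crux is the middle term of \eqref{pf:decomp}, and the point is that it is harmless precisely because rotation contracts distances near a coordinate singularity, so $\bar\psi$ is nearly stationary where $\bar\lambda_1\approx1$. Since $D\Phi=cI_n-sD^2\bar u$, in the diagonalizing frame $(D\Phi)_{l1}=(c-s\bar\lambda_1)\delta_{l1}=\tfrac1{\sqrt2}(1-\bar\lambda_1)\delta_{l1}$, whence
\begin{align}
\label{pf:psibar}
\bar\psi_{\bar1\bar1}=\sum_{l,m}\psi_{lm}(D\Phi)_{l1}(D\Phi)_{m1}-s\sum_l\psi_l\,\bar u_{11l}=\tfrac12\psi_{11}(1-\bar\lambda_1)^2-s\sum_l\psi_l\,\bar u_{11l}.
\end{align}
The first term of \eqref{pf:psibar} is $O((1-\bar\lambda_1)^2)$, which is $\le C(\ln\sqrt2-\bar b)^2$ for a locally bounded $C$, using $\psi\in C^{2}$ and $\ln\sqrt2-\bar b\sim\tfrac12(1-\bar\lambda_1)$. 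For the second term, the first-variation formula for a simple eigenvalue gives $\partial_l\bar b=\tfrac{\bar\lambda_1}{1+\bar\lambda_1^2}\bar u_{11l}$ at this point, so $\tfrac{\bar\lambda_1}{1+\bar\lambda_1^2}\bigl(-s\sum_l\psi_l\bar u_{11l}\bigr)=-s\sum_l\psi_l\,\partial_l\bar b$, again a first-order term absorbed into $\langle a,\nabla_{\bar g}\bar b\rangle_{\bar g}$ (passing from the Euclidean to the $\bar g$-pairing costs only a locally bounded factor). Combining, \eqref{pf:jacobi} follows. The main obstacle is exactly this last step: unlike in the constant-phase case, one must check that the variable-phase perturbations do not merely yield the unusable $\Delta_{\bar g}\bar b\ge -C|\nabla\bar b|-C$, and the saving identity is the cancellation $c-s\bar\lambda_1=0$ at $\bar\lambda_1=1$ together with the gradient structure exposed in \eqref{pf:psibar}.
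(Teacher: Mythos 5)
Your overall strategy is the same as the paper's: rotate, use the Jacobi-type computation of \cite[Lemma 4.2]{AB} for $\ln\sqrt{1+\bar\lambda^2}$, observe that the variable-phase terms degenerate like $(1-\bar\lambda)^2$ plus a gradient term because $\pd\bar x/\pd x$ degenerates as $c-s\bar\lambda_{max}\to 0$, apply the strong maximum principle to propagate $\bar\lambda_{max}\equiv 1$, and contradict the finiteness of $D^2u$ somewhere (your Alexandrov-point ending is a legitimate variant of the paper's touching-quadratic argument, and your chain-rule computation and the key identity for $\bar\psi_{\bar 1\bar 1}$ match the paper's).

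The genuine gap is the eigenvalue-multiplicity step, which you dismiss as ``a standard approximation handling eigenvalue crossings.'' The quantity $\bar b=\ln\sqrt{1+\bar\lambda_{max}^2}$ is in general only Lipschitz where the top eigenvalue is not simple, and the contradiction point $\bar x_0$ with $\bar\lambda_{max}(\bar x_0)=1$ may be exactly such a point (indeed, if the conclusion you are driving at holds, the top eigenvalues coincide on a whole neighborhood). So your differential inequality has only been verified classically at simple points, while the strong maximum principle is being applied at a point where $\bar b$ need not even be differentiable; your first-variation identity $\partial_l\bar b=\frac{\bar\lambda_1}{1+\bar\lambda_1^2}\bar u_{11l}$, used to absorb the $\psi_l\bar u_{11l}$ term into $\langle a,\nabla_{\bar g}\bar b\rangle_{\bar g}$, also presupposes simplicity. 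Either you must prove the inequality holds in the viscosity sense for $\bar b$ across crossings (not automatic, and not written down), or do what the paper does: let $m$ be the multiplicity of $\bar\lambda_{max}$ at $\bar x_0$ and work with $\bar b_m=\frac1m\sum_{i=1}^m\ln\sqrt{1+\bar\lambda_i^2}$, which is genuinely $C^{2,\alpha}$ near $\bar x_0$ since $\bar\lambda_m>\bar\lambda_{m+1}$ there. With $\bar b_m$ the first-order phase terms still recombine, $\sum_{i\le m}\frac{\bar\lambda_i}{1+\bar\lambda_i^2}\partial_{\bar a}\bar\lambda_i=m\,\partial_{\bar a}\bar b_m$ (only the sum, not the individual $\partial_{\bar a}\bar\lambda_i$, is needed), and the quadratic error is controlled by first shrinking the neighborhood so $\bar\lambda_m>0$ and then using convexity of $t\mapsto\ln\sqrt{1+t^2}$ on $[0,1]$ to get $(1-\bar\lambda_m)^2\le C\,(\bar b_{m,max}-\bar b_m)^2$; this replaces your comparison $\ln\sqrt2-\bar b\sim\frac12(1-\bar\lambda_1)$, which again was stated only for the simple top eigenvalue. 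With that substitution your argument closes; without it, the maximum-principle step is not justified.
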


\begin{proof}
%6/6/2021: changed \bar x to \bar x_0 to avoid confusion as in Yuan's email

Suppose that $\bar\lambda_{max}=1$ at a point $\bar x_0$, or more generally that $1=\bar\lambda:=\bar\lambda_{1}=\cdots=\bar\lambda_m>\bar\lambda_{m+1}\ge \cdots\ge \bar\lambda_n$ at $\bar x_0$.  
%6/6: added this below sentence
It follows that the following function is $C^{2,\alpha}$ for $\bar x$ near $\bar{x_0}$: $\bar b_m:=\frac{1}{m}\sum_1^m\ln \sqrt{1+\bar\lambda_k^2}$.  Fixing $\bar x$, 
%6/6: end addition
we assume $D^2\bar u$ is diagonal at $\bar x$, with $\bar u_{\bar i\bar i}(\bar x)=\bar\lambda_i(\bar x)$. Note that $\bar{u}_{\bar{ii}}$ refers to the double partial derivatives of the function $\bar{u}$ w.r.t to $\bar {x_i}$.

From \cite[Lemma 4.1]{AB} we get the following:
%6/6: for $\bar b_m:=\frac{1}{m}\sum_1^m\ln \sqrt{1+\bar\lambda_k^2}$ 
\begin{equation}
m\Delta_{\bar g}\bar b_m=\tilde{Z}+\sum_{i=1}^m\frac{\bar\lambda_i}{1+\bar\lambda_i^2}\bar\psi_{\bar i\bar i}-m\sum_{i=1}^m\bar \lambda_i\bar g^{\bar{i}\bar i}\bar\psi_{\bar i}\pd_{\bar i}\bar b_m \label{main_one}
\end{equation}

where 
\begin{align*}
    \tilde{Z}=\sum_{k\leq m}(1+\bar\lambda^2)\bar h^2_{\overline{kkk}}+(\sum_{i<k\leq m}+\sum_{k<i\leq m})(3+3\bar\lambda^2)\bar h_{\overline{iik}}^2+\sum_{k\leq m<i}\frac{2\bar\lambda(1+\bar\lambda\bar\lambda_i)}{\bar\lambda-\bar\lambda_i}\bar h^2_{\overline{iik}}\nonumber\\
     +\sum_{i\leq m<k}\frac{3\bar\lambda-\bar\lambda_k+\bar\lambda^2(\bar\lambda+\bar\lambda_k)}{\bar\lambda-\bar\lambda_k} {\bar h_{\overline{iik}}}^2
     +2\Bigg[\sum_{i<j<k\leq m}(3+3\bar \lambda^2)\bar h_{\overline{ijk}}^2
     +\sum_{i<j\leq m<k}[1+\frac{2\bar\lambda}{\bar\lambda-\bar\lambda_k}+\nonumber\\
     \frac{\bar\lambda^2(\bar\lambda+\bar\lambda_k)}{\bar\lambda-\bar\lambda_k}]\bar h_{\overline{ijk}}^2
     +\sum_{i\leq m<j<k}\bar\lambda[\bar\lambda_j+\bar\lambda_k+\frac{1+\bar \lambda_j^2}{\bar\lambda-\bar\lambda_j}+\frac{1+\bar\lambda_k^2}{\bar\lambda-\bar\lambda_k}]\bar h_{\overline{ijk}}^2\Bigg]
\end{align*}
with \begin{align*}\bar h_{\overline{ijk}}=\sqrt{\bar g^{\overline{ii}}}\sqrt{\bar g^{jj}}\sqrt{\bar g^{kk}}\bar u_{\overline{ijk}} \text{ and }
\bar g^{ii}=\frac{1}{1+\bar \lambda_i^2}.
\end{align*}

Again following the notation introduced in \cite[pg 11]{AB}, for each fixed $k$ in the above expression, we set 
$\bar t_{i} = \bar h_{\bar{iik}}$ and write
\begin{align*}
    \bar H^k=\bar {t}_1+...+\bar{t}_{n-1}+\bar{t}_n=\bar{t'}+\bar{t}_{n}
     \end{align*}
     where $\bar H^k$ denotes the $k$th component of the mean curvature vector (given by (\ref{mean}): $\bar H^k=\bar g^{kk}\bar\psi_{\bar k}$), i.e. the component of the mean curvature vector along $J(\bar{e_k},\bar D\bar{u}_{\bar{e_k}})$ with $\bar{e_k}$ being the $k^{th}$ eigendirection of $ D^2\bar u$.
One can re-write \[\tilde{Z}=\text{(constant phase terms)}+\tilde{Z_0}\]
where for each fixed $k\leq m$, the $k$th term of $\tilde{Z_0}$ is given by \begin{equation*}
    [\tilde{Z_0}]_k=\frac{2\bar \lambda_n}{\bar \lambda-\bar \lambda_n}[(\bar H^k)^2-2\bar H^k\bar t']. 
\end{equation*}
By constant phase terms, we denote terms without dependence on the variable phase $\psi(x)$, which are therefore the same as in the $\psi=const.$ case considered by \cite{CSY}.
The constant phase terms are nonnegative if the Hessian bounds \eqref{bounds} are true, as in \cite[Section 3]{CSY}.

 %From the computations\footnote{For the sake of brevity, we avoid repeating computations already shown in \cite{AB} and stick to the notations introduced therein.} shown in \cite[pg 11,12, step 2.1]{AB}, for each fixed k, we write
  %\[[\tilde{Z_0}]_k=\frac{2\lambda_n}{\lambda-\lambda_n}
%(H^k-h_{nnk})^2
%[(\sum_{1\leq i\leq n}h_{iik})^2-2\sum_{1\leq i\leq n}h_{iik}\sum_{1\leq i\leq n-1}h_{iik}]
 %\]
 
 %Upon closely following the derivations computed in \cite[Lemma 4.1]{AB}, one observes that the above term has a lower bound given by $-C(|\psi|_{C^{1}}, O(\partial\lambda))$ and at $\bar\lambda_{max}\equiv 1$, it can be treated similarly as the L.H.S. of (\ref{second}) (as shown below); Thereby adding a harmless contribution to the maximum principle. 
Using \eqref{barpsi}, we note the following
\begin{align}
[\tilde{Z_0}]_k
     %\frac{2\bar\lambda_n}{\bar\lambda-\bar\lambda_n}[(\bar H^k)^2-2\bar H^k\bar t']\nonumber\\
     =\frac{\bar\lambda_n}{\bar\lambda-\bar\lambda_n}(\bar{g}^{\overline{kk}})^2(\partial_k\psi)^2(1-\bar\lambda_k)^2-\frac{2\sqrt2\bar\lambda_n}{\bar\lambda-\bar\lambda_n}\bar{g}^{\overline{kk}}(\partial_k\psi)(1-\bar\lambda_k)\bar{t}'\nonumber\\
     \geq -C(|\psi|_{C^1})(1-\bar\lambda_k)[(1-\bar\lambda_k)+1]. \label{Zterm}
\end{align}
Before dealing with (\ref{Zterm}), we would first like to deal with the second and third terms of (\ref{main_one}), since (\ref{Zterm}) can be treated similarly.

The third term of (\ref{main_one}) involving $\nabla \bar b_m$ is harmless, so it suffices to lower bound the second term, which will have two contributions.  Recalling \eqref{barpsi},
\begin{equation}
    \bar\psi_{\bar{ii}}=\frac{\pd^2}{\pd \bar x_i^2}\psi\left(\frac{\bar x-D\bar u(\bar x)}{\sqrt 2}\right)=-\frac{1}{\sqrt 2}\psi_a\partial_{\bar a}\bar\lambda_i+\frac{1}{2}\psi_{ii}(1-\bar\lambda_i)^2. \label{main_two}
\end{equation}
The first term of (\ref{main_two}) yields a harmless contribution to the maximum principle:
$$
\sum_{a=1}^n\psi_a\sum_{i=1}^m\frac{\bar\lambda_i}{1+\bar\lambda_i^2}\partial_{\bar a}\bar\lambda_i=m\sum_{a=1}^n\psi_a\partial_{\bar a}\bar b_m.
$$
For the second term of (\ref{main_two}), we start with
\begin{equation}
  \frac{1}{2}\sum_{i=1}^m\frac{\bar\lambda_i}{1+\bar\lambda_i^2}\psi_{ii}(1-\bar\lambda_i)^2\ge -c(\bar x)(1-\bar\lambda_m)^2, \label{second}
\end{equation}
for some locally bounded $c(\bar x)$.  %Next, we shrink $U$ so that $\bar\lambda_m> 0$ in $U$.  
Next, we note that $\bar\lambda_m>0$ for $\bar x$ nearby $x_0$.  By the convexity of $\bar b(t):=\ln\sqrt{1+t^2}$ in $[0,1]$,
$$
0<\frac{\bar b(1)-\bar b(0)}{1-0}\le \frac{\bar b(1)-\bar b(t)}{1-t},
$$
%Since $\ln\sqrt{1+t^2}$ has nonzero derivative at $t=1$, it follows that
%$$
%1-\bar\lambda_m\le C(\ln\sqrt{1+1^2}-\ln\sqrt{1+\bar\lambda_m^2})
%$$
%for $-1\le \bar\lambda_m\le 1$ and $C$ sufficiently large. 
so putting $t=\bar\lambda_m(\bar x)$ %for $\bar y\in U$ 
yields
\begin{equation}
(1-\bar \lambda_m)^2\le C(\bar b(1)-\bar b(\bar\lambda_m))^2\le C(m\bar b(1)-\sum_{i=1}^{m} \bar b(\bar\lambda_i))^2. \label{psi_trick}
\end{equation}

We thus conclude that in a sufficiently small neighborhood %$U$ 
of $\bar x$,
$$
\Delta_{\bar g}\bar b_m\ge \langle a(\bar x),\nabla_{\bar g}\bar b_m\rangle_{\bar g}-c(\bar x)(\bar b_{m,max}-\bar b_m)%^2
$$
for some %locally 
bounded continuous $a,c$, where $\bar b_{m,max}=\bar b_m|_{\bar\lambda_1=\cdots=\bar\lambda_m=1}$. We make a quick note here: The bound given by (\ref{Zterm}) can be treated in an exactly similar manner as above.

The strong maximum principle still holds because the right side vanishes at an interior maximum, according to \cite[Lemma 3.4 and Theorem 3.5]{GT}.  It follows that $\bar b_{m}\equiv \bar b_{m,max}$ and $\bar\lambda_{max}\equiv 1$ on an open set containing $\bar x$.  Since $\bar x(B_1)$ is connected, we conclude this is true everywhere: %, and in particular,
$\bar\lambda_{max}\equiv 1$.  However, because $u(x)$ is bounded, we can touch it from above somewhere in $B_1(0)$ by a sufficiently tall quadratic $Q$.  The rotation $\bar Q$ then touches $\bar u$ from above somewhere in $\bar x(B_1)$.  But $D^2Q<\infty$ corresponds (see \cite[end of section 3]{CSY}) to $D^2\bar Q<I_n$, a contradiction.
\end{proof}

\section{Proof of Theorem \ref{thm:est}}
\label{sec:comp}
\begin{proof}
We now prove Hessian estimate \eqref{hess_est} by compactness.  If the estimate fails, then there is a sequence of $C^{4,\alpha}$ convex solutions $u_k$ and $C^{2,\alpha}$ phases $\psi_k$ to \eqref{slag} with
\begin{align*}
    \lambda_{max}[D^2u_k(0)]\to \infty,\qquad u_k(0)=Du_k(0)=0\\
    \|u_k\|_{C^1(B_{1/2}(0))}+\|\psi_k\|_{C^{2,\alpha}(B_{1/2}(0))}\le C.
\end{align*}

Since $\|u_k\|_{C^1}$ is bounded, we can pass to a subsequence and assume $u_k$ converges uniformly to $u$ on $B_{1/2}(0)$. Note that $u$ is necessarily convex, so $\tilde u:=\frac{1}{\sqrt 2}(u+\frac{1}{2}|x|^2)$, as in Section 2, is uniformly convex. This means $\partial\tilde u(B_{r}(0))$ is open and contains $B_{r/\sqrt 2}(0)$, for $0<r\le 1/2$. Moreover, if we shrink slightly and suppose $\bar x\in \partial\tilde u(B_{1/2-\delta}(0))$, then $\bar x\in\partial\tilde u_k(B_{1/2}(0))$ for large enough $k$, since if $\bar x\in \partial\tilde u_k(x_k)$, then uniform convexity yields, via \eqref{xsmall},
$$
|x-x_k|^2\le C\|\tilde u-\tilde u_k\|_{L^\infty(B_{1/2}(0)}\to 0.
$$
So the rotated sequence $\bar u_{k}$ is defined on arbitrarily large subsets of $\partial\tilde u(B_{1/2}(0))$, converging uniformly to $\bar u$ thereabouts by the order preservation of rotation.  It follows that $\bar u$ is the locally uniform limit on $\partial\tilde u(B_{1/2}(0))$ of smooth rotations $\bar u_k$.  

\medskip
The smooth rotations $\bar u_k$ have eigenvalues which blowup: $\bar\lambda_{max,k}(0)\to 1$.  To see this for $\bar u$, we use the $C^{2,\alpha}$ estimates for $\bar u_k$, noting that $\psi_k$ converges to some $\psi\in C^{2,\alpha}(B_{1/2}(0))$ in the norm of $C^{2,\alpha/2}(B_{1/2}(0))$ after taking a subsequence.  Along a subsequence, it follows that $\bar u_k$ eventually converges locally in $C^{4,\alpha/2}$ to $\bar u$ in $\partial \tilde u(B_{1/2}(0))$, so $\bar\lambda(0)=1$.  Moreover, $\bar u$ is a $C^{4,\alpha}$ solution of the rotated equation $\bar F(D^2u)=\bar\psi$, so $\bar\lambda_{max}\equiv 1$ by the strong maximum principle arguments in Proposition \ref{prop:max}.  This is a contradiction, since $\bar\lambda_{max}<1$ somewhere on $\partial\tilde u(B_{1/2}(0))$ for bounded convex $u$.
\end{proof}
\begin{remark}
In fact, a stronger Hessian estimate than \eqref{hess_est} holds:
\begin{equation}
    |D^2u(0)|\leq C_1\exp [C_2\, (osc_{B_{1/2}}u) ^{2n-2}] 
\end{equation}
where $C_1$ and $C_2$ are positive constants depending on $||\psi||_{C^{1,1}(B_{1/2})}$ and $n$.  This result follows from the methods in \cite{AB}; the proof in \cite{AB} goes through if the supercriticality condition $|\psi|\ge (n-2)\pi/2+\delta$ is replaced by the convexity condition $D^2u\ge 0$. 
%%ADDED 6/15/20
A weaker estimate for such smooth solutions was obtained earlier in \cite[Theorem 8]{WTh}.  %See also \cite{BS2}, where the estimate is generalized to the case of $\psi=\psi(x,u)$.
\end{remark}

\bibliographystyle{amsalpha}
\bibliography{BiB}

\end{document}